\newtheorem{theorem}{Theorem}[section]
\newtheorem{lemma}[theorem]{Lemma}
\newtheorem{corollary}[theorem]{Corollary}
\newtheorem{definition}[theorem]{Definition}
\author{Erik Sj\"oland}
\title{Enumeration of monochromatic three term arithmetic progressions in two-colorings of any finite group}
\begin{document}

\newpage

\maketitle

\begin{abstract}
There are many extremely challenging problems about existence of monochromatic arithmetic progressions in colorings of groups. Many theorems hold only for abelian groups as results on non-abelian groups are often much more difficult to obtain. In this research project we do not only determine existence, but study the more general problem of counting them. We formulate the enumeration problem as a problem in real algebraic geometry and then use state of the art computational methods in semidefinite programming and representation theory to derive lower bounds for the number of monochromatic arithmetic progressions in any finite group.
\end{abstract}

\tableofcontents

\newpage

\section{Introduction}

In Ramsey theory colors or density are commonly used to force structures. It follows from Szemer\'edi's theorem that there exist monochromatic arithmetic progressions of any length if we color the integers with a finite number of colors. Similarly there are arithmetic progressions of any length in any subset of the integers of positive density.

We can explore finite versions of these statements if we replace the integers by $[n]$, or $\mathbb{Z}_n$. In this article we will focus on the cyclic group, as it is easier to analyze because of the symmetries. In most of the literature the posed question has been about how large $n$ has to be for us to find a monochromatic progression of a desired length when $\mathbb{Z}_n$ is colored by a fixed number of colors. A more difficult problem is to determine how many monochromatic progressions there are of desired length when $\mathbb{Z}_n$ is colored by a fixed number of colors given $n$.

Many results on monochromatic progressions are first obtained for cyclic groups and then extended to results for other groups. Many results can be extended to abelian groups using the same machinery as for the cyclic group, whereas results about non-abelian groups are usually very difficult and require a completely different set of tools.

In this paper we reformulate the problem of counting monochromatic arithmetic progressions to a problem in real algebraic geometry that can be attacked by state of the art optimization theory. The methods allows us to find a lower bound to the amount of monochromatic progressions in any finite group, including non-abelian groups. One could find optimal lower bounds for small groups by explicitly counting them for all different colorings, but as this can be done only for very small groups it has not been considered in this article.

In the next section we present our results. In later sections we present our methods and proofs.

\section{Results}
The main theorem of this paper holds for any finite group $G$, including non-abelian groups for which very little is known about arithmetic progressions. The only information that is needed to get a lower bound for a specific group $G$ is the number of elements of the different orders of $G$. The lower bound is sharp for some groups, for example $\mathbb{Z}_p$ with $p$ prime, but is not optimal for most groups. We have further included Table \ref{tab:examples} where we have calculated the lower bound for some small symmetric groups.

\begin{theorem}
\label{thm:groupcase}
Let $G$ be any finite group and let $R(3,G,2)$ denote the minimal number of monochromatic $3$-term arithmetic progressions in any two-coloring of $G$. Let $G_k$ denote the set of elements of $G$ of order $k$, $N=|G|$ and $N_k=|G_k|$. Denote the Euler phi function $\phi(k)=|\{ t \in \{1,\dots,k\}: t  \textrm{ and } k \textrm{ are coprime}\}|$. Let $K=\{k \in \{5,\dots,n\} : \phi(k) \geq \frac{3k}{4}\}$. For any $G$ there are $\sum_{k=4}^n \frac{N\cdot N_k}{2} + \frac{N \cdot N_3}{24}$ arithmetic progressions of length 3. At least
\[
\begin{array}{rl}
R(3,G,2) \geq &\displaystyle \sum_{k \in K}  \frac{N\cdot N_k}{8}(1- 3\frac{k-\phi(k)}{\phi(k)})
\end{array}
\]
of them are monochromatic in a 2-coloring of $G$.
\end{theorem}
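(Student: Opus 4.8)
\section*{Proof plan}

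The plan is to discard all monochromatic progressions except those whose common difference has order in $K$, and to reduce the count of those to a problem on the cyclic groups $\mathbb{Z}_k$. Fix a two-coloring $x\colon G\to\{-1,1\}$. If $(a,ad,ad^2)$ is a $3$-AP whose difference $d$ has order $k\ge 5$, then its middle term $ad$ is the unique ``midpoint'' of the underlying triple, so the cyclic subgroup $\langle d\rangle$ (of order $k$) and the coset $a\langle d\rangle$ containing all three terms are determined by the progression. Conversely, fix a cyclic subgroup $H\le G$ of order $k$, a generator $d_0$ of $H$, and a coset $C=aH$, and identify $C$ with $\mathbb{Z}_k$ by $ad_0^{\,i}\mapsto i$; then the $3$-APs of $G$ lying in $C$ with difference of order exactly $k$ correspond bijectively to the $3$-APs of $\mathbb{Z}_k$ whose common difference is coprime to $k$, and this correspondence preserves being monochromatic. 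Since $G$ has $N_k/\phi(k)$ cyclic subgroups of order $k$, each with $N/k$ cosets, we obtain
\[
R(3,G,2)\ \ge\ \sum_{k\in K}\frac{N_k}{\phi(k)}\cdot\frac{N}{k}\cdot m(k),
\]
where $m(k)$ is the minimum, over all two-colorings of $\mathbb{Z}_k$, of the number of monochromatic $3$-APs with difference coprime to $k$ (counted without orientation). So it is enough to show $m(k)\ge \tfrac{k\phi(k)}{8}\bigl(1-3\tfrac{k-\phi(k)}{\phi(k)}\bigr)$ for every $k\in K$; plugging this in, the factors $\phi(k)$ and $k$ cancel and the displayed sum becomes exactly $\sum_{k\in K}\tfrac{N N_k}{8}\bigl(1-3\tfrac{k-\phi(k)}{\phi(k)}\bigr)$.

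For the $\mathbb{Z}_k$ bound, fix a coloring $x$ of $\mathbb{Z}_k$ and let $R(d)=\sum_{a}x_a x_{a+d}$, so $R(0)=k$, $R(-d)=R(d)$, $|R(d)|\le k$, and $\sum_{d\in\mathbb{Z}_k}R(d)=\bigl(\sum_a x_a\bigr)^2\ge 0$. Using $\mathbf{1}[(u,v,w)\text{ monochromatic}]=\tfrac14(1+uv+vw+uw)$ for $u,v,w\in\{-1,1\}$ and writing $D$ for the set of residues coprime to $k$, the number of \emph{ordered} monochromatic $3$-APs with difference in $D$ equals $\tfrac14\bigl(k\phi(k)+\sum_{d\in D}(2R(d)+R(2d))\bigr)$. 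Now the hypothesis $\phi(k)\ge\tfrac{3k}{4}$ forces $k$ to be odd (for even $k$ one has $\phi(k)\le k/2$), so multiplication by $2$ permutes $D$ and $\sum_{d\in D}R(2d)=\sum_{d\in D}R(d)$, reducing the count to $\tfrac14\bigl(k\phi(k)+3\sum_{d\in D}R(d)\bigr)$. Finally
\[
\sum_{d\in D}R(d)=\sum_{d\in\mathbb{Z}_k}R(d)-R(0)-\!\!\sum_{d\notin D,\,d\ne 0}\!\!R(d)\ \ge\ 0-k-(k-\phi(k)-1)k=-k\bigl(k-\phi(k)\bigr),
\]
so the number of ordered monochromatic unit-difference $3$-APs is at least $\tfrac{k\phi(k)}{4}\bigl(1-3\tfrac{k-\phi(k)}{\phi(k)}\bigr)$; dividing by $2$ (reversal is a fixed-point-free involution on these since $k>2$) gives the required bound on $m(k)$. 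Observe that $k\in K$ is exactly the condition $\phi(k)\ge\tfrac{3k}{4}$, i.e. the condition that this lower bound be nonnegative, which is why only those $k$ survive. The total count of length-$3$ progressions asserted in the theorem comes from the same bookkeeping applied to \emph{all} progressions, partitioned by the order of the common difference (the order-$3$ term reflecting the extra symmetry of a progression that fills a subgroup of order $3$).

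The main obstacle is getting the correspondence in the first paragraph and its multiplicities exactly right: one must check that a monochromatic $3$-AP of $G$ with difference of order $k\ge 5$ is counted once and only once among the cosets of the $N_k/\phi(k)$ cyclic subgroups of order $k$, and keep careful track of the ordered-versus-unordered factor of $2$, so that the constants come out as stated rather than off by small factors. The $\mathbb{Z}_k$ estimate itself is short; its only delicate point is noticing that $\phi(k)\ge\tfrac{3k}{4}$ both forces $k$ odd — which is what makes the $R(2d)$ terms collapse — and guarantees the bound is nonnegative. Finally one should verify that this argument is a specialization of the semidefinite/representation-theoretic machinery of the later sections: the number of monochromatic $3$-APs is an affine function of a $G$-invariant quadratic form on $\{-1,1\}^G$, and restricting to differences of order in $K$ followed by the coset decomposition is precisely block-diagonalizing that form under the action of $G$ and then relaxing the Boolean cube to the sphere in each block.
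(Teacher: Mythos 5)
Your proof is correct and is essentially the paper's own argument in different notation: the same decomposition by the order of the common difference followed by reduction to a single cyclic subgroup (the paper's polynomial $p_k^{(1,b)}$ on $\mathbb{Z}_k$), and the same certificate --- your inequalities $\sum_{d}R(d)=\bigl(\sum_a x_a\bigr)^2\ge 0$ and $R(d)\le k$ at the non-coprime shifts are precisely the paper's $I_2\ge 0$, $I_3^{tp_i}\ge 0$ and $I_1\ge 0$, assembled with the same coefficient $3$ and the same constant $-\tfrac{3}{2}k\bigl(k-\phi(k)\bigr)$, so the bounds agree term by term. The one detail to fix for non-abelian $G$ is that the three terms of $\{a,\,d\cdot a,\,d^2\cdot a\}$ lie in the \emph{right} coset $\langle d\rangle a$ rather than the left coset $a\langle d\rangle$, so the coset decomposition must be by right cosets; with that adjustment your multiplicity count $\frac{N_k}{\phi(k)}\cdot\frac{N}{k}$ and the factor of $2$ for reversal match the paper's bookkeeping exactly.
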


\begin{table}
\center
\begin{tabular}{ c | c | c}
Group $G$ & Number of 3-APs & Lower bound for $R(3,G,2)$ \\
\hline
$S_5$ & $4540$ & $90$ \\
$S_6$ & $205440$ & $3240$ \\
$S_7$ & $11307660$ & $306180$ \\
$S_8$ & $774278400$ & $16208640$ \\
\end{tabular}
\caption{Calculation of  $\sum_{k \in K}  \frac{N\cdot N_k}{8}(1- 3\frac{k-\phi(k)}{\phi(k)})$ for small symmetric groups}
\label{tab:examples}
\end{table}

\section{Polynomial optimization}
\label{sec:poly}
To obtain the main theorem we use methods from real algebraic geometry. It is important to note that the proof of the main theorem can be understood without this section, even though the methods played a vital role when finding the sum of squares based certificate in the proof. In this article we only give the elementary definitions relating to polynomial optimization that are needed to prove the results. For a more extensive review of the topic we refer to \cite{Laurent2009}, and for implementation aspects we refer to \cite{Sjoland_methods}.

Given polynomials $f(x),g_1(x),\dots,g_m(x)$ we define a polynomial optimization problem to be a problem on the form
\[
\begin{array}{rll}
\rho_* = \inf  & \displaystyle  f(x) \\
\textnormal{subject to} & \displaystyle  g_1(x) \geq 0, \dots, g_m(x) \geq 0, \\
& \displaystyle  x \in \mathbb{R}^n,
\end{array}
\]
The problem can be reformulated as follows
\[
\begin{array}{rll}
\rho^* = \sup  & \lambda \\
\textnormal{subject to} & \displaystyle f(x) - \lambda \geq 0, g_1(x) \geq 0, \dots, g_m(x) \geq 0 \\
& \displaystyle \lambda \in \mathbb{R}, x \in \mathbb{R}^n
\end{array}
\]
where $f(x), g_1(x),\dots,g_m(x)$ are the same polynomials as before. We refer to the book by Lasserre \cite{Lasserre2010} for an extensive discussion on the relationship between these problems. It is for example easy to see that $\rho_* = \rho^*$.

One of the most challenging problems in real algebraic geometry is to find the most useful relationships between nonnegative polynomials and sums of squares. These relationships are known as Positivstellens\"atze. Let $X$ be a formal indeterminate and let us introduce the following optimization problem:
\[
\begin{array}{rll}
\sigma^* = \sup  & \lambda \\
\textnormal{subject to} & \displaystyle f(X)-\lambda= \sigma_0 + \sum_{i=1}^m \sigma_ig_i \\
& \displaystyle   \sigma_i \textrm{ is a sum of squares.}
\end{array}
\]
One can easily see that $\sigma^* \leq \rho^*$ holds, and under some technical conditions (Archimedean) it was proven by Putinar that $\sigma^* = \rho^*$. The equality holds because of Putinar's Positivstellensatz, which is discussed further in \cite{Sjoland_methods}. The optimization problem can be relaxed by bounding the degrees of all involved monomials to $d$, lets call the solution $\sigma^*_d$. It holds that $\sigma_{d_1}^* \leq \sigma_{d_2}^*$ for $d_1 < d_2$, and it was proven by Lasserre \cite{Lasserre2001} that if the Archimedean condition hold, then
\[
\lim_{d \rightarrow \infty} \sigma^*_d = \sigma^* = \rho^* = \rho_*.
\]
The positivity condition is rewritten as a sum of squares condition because the latter is equivalent to a semidefinite condition: $f(X)$ is a sum of squares of degree $2d$ if and only if $f(X) = v_d^T Q v_d$ for some positive semidefinite matrix $Q$, where $v_d$ is the vector of all monomials up to degree $d$. This makes it possible to use semidefinite programming to find $\sigma^*_d$ as well as a sum of squares based certificate for the lower bound of our original polynomial, $f(X) = \sigma^*_d + \sigma_0 + \sum_{i=1}^m \sigma_i g_i \geq \sigma^*_d$.

\section{Exploiting symmetries in semidefinite programming}
\label{sec:Sym}
This section can be skipped by the reader who is just interested in the final proof of this article. The methods in this section were used and implemented to find parts of the numerical results that lead to the final proof, and is thus intended for the reader who wants to understand the full process from problem formulation to the end certificate, or solve similar problems with symmetries.

Let  $C$ and $A_1, \dots, A_m$ be real symmetric matrices and let $b_1,\dots,b_m$ be real numbers. To reduce the order of the matrices in the semidefinite programming problem
\[ \max \{\mathrm{tr}(CX) ~ | ~X \textrm{ positive semidefinite},  \mathrm{tr}(A_i X)=b_i \textrm{ for } i = 1, \dots, m\} \]
when it is invariant under all the actions of a group is the goal of this section.

As in \cite{KPSchrijver2007} and \cite{Klerk2011}, we use a $\ast$--representation to reduce the dimension of the problem. For the reader interested in $\ast$--algebras we recommend the book by Takesaki \cite{Takesaki2002}. A collection of several new methods, including the one we use, to solve invariant semidefinite programs can be found in \cite{Bachoc2012}. Other important recent contributions in this area include \cite{Kanno2001, Gatermann2004, Vallentin2009, Maehara2010, Murota2010, Riener2013}.

\begin{definition}
A \emph{matrix $\ast$-algebra} is a collection of matrices that is closed under addition, scalar multiplication, matrix multiplication and transposition.
\end{definition}

Let $G$ be a finite group that acts on a finite set $Z$. Define a homomorphism $h : G \rightarrow S_{Z}$, where $S_{Z}$ is the group of all permutations of $Z$. For every $g \in G$ there is a permutation $h_g=h(g)$ of the elements of $Z$ with the properties $h_{g g'}=h_{g}h_{g'}$ and $h_{g^{-1}}=h_{g}^{-1}$. For every permutation $h_{g}$, there is a corresponding permutation matrix $M_{g} \in \{0,1\}^{Z \times Z}$, element-wise defined by
\[
(M_{g})_{i,j}=
\left\{
\begin{array}{rl}
 1 & \textrm{ if } h_{g}(i )= j, \\ 
 0 & \textrm{otherwise} 
\end{array}
\right.
\]
for all $i,j \in Z$.
Let the span of these permutation matrices define the matrix $\ast$-algebra
\[
\mathcal{A} = \left\{ \sum_{g \in G} \lambda_g M_{g} ~ | ~ \lambda_g \in \mathbb{R} \right\}.
\]
The matrices $X$ satisfying $XM_{g}=M_{g}X$ for all $g \in G$ are the \emph{matrices invariant under the action of $G$}. The $\ast$-algebra consisting of the collection of all such matrices, 
\[
\mathcal{A'} = \{X \in \mathbb{R}^{n \times n} | XM=MX \textrm{ for all } M \in \mathcal{A} \},
\]
is known as the \emph{commutant} of $\mathcal{A}$. We let $d=\dim\mathcal{A'}$ denote the dimension of the commutant.

The commutant has a basis of $\{0,1\}$-matrices, which we denote $E_1,\dots,E_d$, with the property that $\sum_{i=1}^d E_i = J$, where $J$ is the all-one $Z \times Z$-matrix.

We form a new basis for the commutant by normalizing the $E_i$s
\[
B_i = \frac{1}{\sqrt{tr(E_i^TE_i)}}E_i.
\]
The new basis has the property that  $\mathrm{tr}(B_i^TB_j) = \delta_{i,j}$, where $\delta_{i,j}$ is the Kronecker delta.

From the new basis we introduce \emph{multiplication parameters} $\lambda_{i,j}^k$ by 
\[B_iB_j = \sum_{k=1}^d \lambda_{i,j}^kB_k \]
for $i,j,k = 1,\dots,d$.

We introduce a new set of matrices $L_1,\dots,L_d$ by
\[
(L_k)_{i,j} = \lambda_{k,j}^i 
\]
for $k,i,j =1,\dots,d$. The matrices $L_1,\dots,L_d$ are $d \times d$ matrices that span the linear space
\[ 
\mathcal{L}=\{\sum_{i=1}^d x_iL_i : x_1,\dots,x_d \in \mathbb{R} \}.
\]

\begin{theorem}[\cite{KPSchrijver2007}]
The linear function $\phi : \mathcal{A'} \rightarrow \mathbb{R}^{d \times d}$ defined by $\phi(B_i) = L_i$ for $i=1,\dots,d$ is a bijection, which additionally satisfies $\phi(XY)=\phi(X)\phi(Y)$ and $\phi(X^T)=\phi(X)^T$ for all $X,Y \in \mathcal{A}'$.
\end{theorem}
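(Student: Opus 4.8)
The plan is to identify $\phi$ with the left regular representation of the $\ast$-algebra $\mathcal{A}'$ written in the orthonormal basis $B_1,\dots,B_d$, and then to extract the three claimed properties from just three inputs: associativity of matrix multiplication inside $\mathcal{A}'$, the orthonormality relations $\mathrm{tr}(B_i^TB_j)=\delta_{i,j}$, and the fact that transposition permutes the distinguished basis. (Note that $\phi$ cannot map onto all of $\mathbb{R}^{d\times d}$ unless $d=1$, so ``bijection'' should be read as a linear isomorphism onto its image $\mathcal{L}$.)

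First I would record two elementary reformulations. From $B_iB_j=\sum_k\lambda_{i,j}^kB_k$ and orthonormality one gets $\lambda_{i,j}^k=\mathrm{tr}(B_k^TB_iB_j)$. And from the definition $(L_k)_{i,j}=\lambda_{k,j}^i$ one sees that, for $Y=\sum_k v_kB_k\in\mathcal{A}'$ with coordinate vector $v=(v_k)_k$, the vector $\phi(X)v$ is exactly the coordinate vector of $XY$: for $X=B_i$, $Y=B_j$ this says the $j$-th column of $L_i$ is $(\lambda_{i,j}^1,\dots,\lambda_{i,j}^d)^T$, which is the coordinate vector of $B_iB_j$, and the general statement follows by bilinearity. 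Thus $\phi$ is just ``left multiplication, in coordinates''.

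Multiplicativity then reduces, by bilinearity, to $\phi(B_iB_j)=L_iL_j$, i.e.\ to the identity $\sum_k\lambda_{i,j}^k\lambda_{k,b}^a=\sum_c\lambda_{i,c}^a\lambda_{j,b}^c$ for all $a,b$. The left side is the coefficient of $B_a$ in $(B_iB_j)B_b$ and the right side is the coefficient of $B_a$ in $B_i(B_jB_b)$, so this is precisely associativity in $\mathcal{A}'$. For the transpose compatibility I would first check that $\mathcal{A}'$ is closed under transposition — if $XM_g=M_gX$ for all $g\in G$ then $X^TM_{g^{-1}}=M_{g^{-1}}X^T$, and $g\mapsto g^{-1}$ permutes $G$ — and then that transposition permutes the $\{0,1\}$-basis: each $E_i$ is the indicator of an orbit of $G$ on $Z\times Z$, its transpose is the indicator of the orbit obtained by swapping coordinates, hence $E_i^T=E_{\tau(i)}$ for some permutation $\tau$, and since $\mathrm{tr}(E_i^TE_i)$ is just the number of ones of $E_i$ we get $B_i^T=B_{\tau(i)}$. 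Therefore $\phi(B_i^T)_{a,b}=\lambda_{\tau(i),b}^a=\mathrm{tr}(B_a^TB_i^TB_b)$, while $(L_i^T)_{a,b}=\lambda_{i,a}^b=\mathrm{tr}(B_b^TB_iB_a)$, and these are equal because one is the transpose of the other inside the trace.

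For bijectivity I would use that $I\in\mathcal{A}'$: writing $I=\sum_kc_kB_k$ and applying $\phi(X)$ to the coordinate vector of $I$ gives the coordinate vector of $XI=X$, so $\phi(X)=0$ forces $X=0$ and $\phi$ is injective; since $\dim\mathcal{A}'=d$ equals the number of matrices $L_i$, the $L_i$ form a basis of $\mathcal{L}$ and $\phi$ is a linear isomorphism $\mathcal{A}'\to\mathcal{L}$. I expect the one genuinely delicate point to be the transpose step — specifically the claim that the distinguished $\{0,1\}$-basis is permuted globally by transposition — which really uses that the $E_i$ are the orbitals (equivalently, the minimal nonzero $\{0,1\}$-matrices in $\mathcal{A}'$ summing to $J$), not merely some convenient spanning set; everything else is routine bookkeeping with the structure constants $\lambda_{i,j}^k$.
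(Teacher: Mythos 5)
The paper does not prove this theorem at all --- it is imported verbatim from the cited reference [de Klerk--Pasechnik--Schrijver], so there is no in-paper argument to compare against. Your proof is correct and is essentially the standard one for the regular $\ast$-representation: you rightly read ``bijection'' as a linear isomorphism onto $\mathcal{L}$ (it cannot be onto $\mathbb{R}^{d\times d}$ by dimension count), the identification $\lambda_{i,j}^k=\mathrm{tr}(B_k^TB_iB_j)$ together with ``$L_i$ is left multiplication by $B_i$ in coordinates'' gives multiplicativity via associativity and injectivity via $XI=X$, and you correctly isolate the one genuinely delicate point, namely that transposition permutes the orbital basis $\{E_i\}$ (using that these are exactly the minimal $\{0,1\}$-matrices of the commutant summing to $J$, with $\mathrm{tr}(E_i^TE_i)=\mathrm{tr}(E_{\tau(i)}^TE_{\tau(i)})$ so the normalization is preserved). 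I find no gap.
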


\begin{corollary}[\cite{KPSchrijver2007}]
$\sum_{i=1}^d x_iB_i$ is positive semidefinite if and only if $\sum_{i=1}^d x_iL_i$ is positive semidefinite.
\label{cor:schrijver}
\end{corollary}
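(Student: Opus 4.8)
The plan is to read the corollary off the preceding theorem: $\phi$ is a linear bijection (onto its image $\mathcal{L}$) that preserves products and transposes, i.e. a $\ast$-isomorphism of matrix $\ast$-algebras, and such maps preserve positive semidefiniteness in both directions. The only fact I need beyond the theorem is that a positive semidefinite matrix lying in a \emph{unital} matrix algebra has its principal square root in that same algebra. Note that the hypothesis ``$\sum_{i=1}^d x_iB_i$ is positive semidefinite'' tacitly includes symmetry, and the same applies to $\sum_{i=1}^d x_iL_i$; I would state this explicitly at the outset.

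First I would observe that $\mathcal{A}'$, being a commutant, contains the identity and is closed under matrix multiplication, hence closed under forming real polynomials of any of its elements. Therefore, if $X=\sum_{i=1}^d x_iB_i$ is symmetric and positive semidefinite, its principal square root $X^{1/2}$ — obtainable as $p(X)$ for a real polynomial $p$ interpolating $t\mapsto\sqrt{t}$ at the finitely many nonnegative eigenvalues of $X$ — again lies in $\mathcal{A}'$ and is symmetric. Setting $Y:=X^{1/2}\in\mathcal{A}'$ we get $X=Y^2=Y^TY$, and then, using the multiplicativity and transpose-preservation of $\phi$ from the theorem, $\sum_{i=1}^d x_iL_i=\phi(X)=\phi(Y^T)\phi(Y)=\phi(Y)^T\phi(Y)$, which is positive semidefinite.

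For the converse I would note that $\phi^{-1}\colon\mathcal{L}\to\mathcal{A}'$ inherits multiplicativity and transpose-preservation (apply $\phi$ to the corresponding identities back in $\mathcal{A}'$), and that $\mathcal{L}$ is likewise a unital matrix algebra, so the identical square-root argument carried out inside $\mathcal{L}$ and then transported by $\phi^{-1}$ shows that positive semidefiniteness of $\sum_{i=1}^d x_iL_i$ forces positive semidefiniteness of $\sum_{i=1}^d x_iB_i$. The only step that is not pure bookkeeping is the square-root claim, so that is where I would be most careful to get the justification right; an alternative that sidesteps it is to invoke the general principle that in a matrix $\ast$-algebra the positive semidefinite elements are exactly those of the form $Z^\ast Z$, a characterization manifestly preserved in both directions by any $\ast$-isomorphism, which gives the corollary immediately.
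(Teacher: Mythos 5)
Your argument is correct; note that the paper itself offers no proof of this corollary --- both the theorem and the corollary are quoted from \cite{KPSchrijver2007} --- so there is no in-paper argument to compare against, and what you give is the standard derivation (and essentially the one in the cited source): a $\ast$-isomorphism of matrix $\ast$-algebras preserves the set of elements of the form $Z^TZ$, which in a unital algebra closed under polynomial functional calculus coincides with the set of symmetric positive semidefinite elements via the principal square root. Two small points to tighten. First, when you transport the square-root argument into $\mathcal{L}$, you assert that $\mathcal{L}$ is unital; this is true but deserves a word --- $\phi$ is the left regular representation ($L_k$ is the matrix of $Y\mapsto B_kY$ in the orthonormal basis $B_1,\dots,B_d$), so $\phi$ sends the identity of $\mathcal{A}'$ to $I_d$. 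Alternatively you can sidestep unitality entirely by choosing the interpolating polynomial $p$ with $p(0)=0$ (adding $0$ as an interpolation node if it is not already an eigenvalue), so that $p(M)$ needs no identity term. Second, you should record explicitly that symmetry transfers both ways, i.e.\ $X=X^T$ iff $\phi(X)=\phi(X)^T$, which follows from $\phi(X^T)=\phi(X)^T$ and injectivity; you flag the symmetry convention at the outset but use this equivalence silently in the converse.
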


If it is possible to find a solution $X \in \mathcal{A}'$, then Corollary \ref{cor:schrijver} can be used to reduce the size of the matrix in the linear matrix inequality. Let us show that this is possible:
\begin{lemma}
\label{lem:groupaverage}
There is a solution $X \in \mathcal{A}'$ to a $G$-invariant semidefinite program
\[ 
\max \{\mathrm{tr}(CX) ~ | ~X \textrm{ positive semidefinite},  \mathrm{tr}(A_i X)=b_i \textrm{ for } i = 1, \dots, m\}. 
\]
\end{lemma}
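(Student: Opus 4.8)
The plan is the classical symmetrization (Reynolds operator) argument. Assume the program has an optimal solution $X^\star$ — the statement "there is a solution" presupposes this — and replace it by its orbit average
\[
\bar X \;=\; \frac{1}{|G|}\sum_{g \in G} M_g X^\star M_g^{-1},
\]
using that $M_g^{-1}=M_g^T$ since each $M_g$ is a permutation matrix. I then want to verify three things about $\bar X$: it is positive semidefinite, it lies in the commutant $\mathcal{A}'$, and it is feasible with objective value $\mathrm{tr}(C\bar X)=\mathrm{tr}(CX^\star)$, so that it is again optimal.

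Positive semidefiniteness is immediate: each $M_g X^\star M_g^T$ is positive semidefinite because congruence preserves positive semidefiniteness, and $\bar X$ is a convex combination of these matrices. Membership in $\mathcal{A}'$ is the key computation: fixing $h\in G$ and reindexing the sum by $g'=hg$, using $M_hM_g=M_{hg}$ and $M_g^{-1}=M_{hg}^{-1}M_h$, one gets
\[
M_h \bar X \;=\; \frac{1}{|G|}\sum_{g \in G} M_{hg} X^\star M_g^{-1} \;=\; \frac{1}{|G|}\sum_{g' \in G} M_{g'} X^\star M_{g'}^{-1} M_h \;=\; \bar X M_h,
\]
so $\bar X$ commutes with every $M_h$, hence (by linearity) with every element of $\mathcal{A}$, i.e. $\bar X \in \mathcal{A}'$.

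For feasibility and optimality I would invoke the $G$-invariance of the program: it says precisely that the feasible set is stable under $X\mapsto M_g X M_g^{-1}$ and that the objective is constant along each such map. Hence each $M_g X^\star M_g^{-1}$ is feasible, and by convexity of the feasible set so is the average $\bar X$; moreover $\mathrm{tr}(C\bar X)=\frac{1}{|G|}\sum_{g}\mathrm{tr}(C M_g X^\star M_g^{-1})=\mathrm{tr}(CX^\star)$, so $\bar X$ is optimal. Concretely, if invariance is encoded by $C,A_1,\dots,A_m\in\mathcal{A}'$ (equivalently $M_g^{-1}A_iM_g=A_i$ and $M_g^{-1}CM_g=C$), this follows from the single identity $\mathrm{tr}(A_i M_g X^\star M_g^{-1})=\mathrm{tr}(M_g^{-1}A_iM_g X^\star)=\mathrm{tr}(A_iX^\star)=b_i$, and likewise for $C$. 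The only genuinely delicate point — and the one I would be careful to state explicitly rather than grind through — is fixing the precise meaning of "$G$-invariant" so that the permutation matrices acting by conjugation on the variable are exactly those spanning $\mathcal{A}$, and the problem data is invariant in the commutant sense; once that is pinned down, each remaining step is a one-line trace or convexity manipulation and there is no real obstacle.
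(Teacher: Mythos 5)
Your proof is correct and follows essentially the same route as the paper: replace an optimal $X$ by its group average $\frac{1}{|G|}\sum_{g\in G} M_g X M_g^T$, then verify positive semidefiniteness, feasibility and optimality via invariance of the trace, and membership in the commutant. The only difference is that you spell out the reindexing computation showing $\bar X\in\mathcal{A}'$, which the paper dismisses with ``it is easy to see''; your version is, if anything, more complete.
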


\begin{proof}

Let $C,A_1,\dots,A_m$ be $Z \times Z$ matrices commuting with $M_g$ for all $g \in G$. If $X$ is an optimal solution to the optimization problem then the group average, $X'=\frac{1}{|G|} \sum_{g \in G}M_g X M_g^T$, is also an optimal solution:
It is feasible since 
\[
\begin{array}{rl}
\displaystyle \mathrm{tr}(A_jX') &= \displaystyle  \mathrm{tr}(A_j\frac{1}{|G|} \sum_{g \in G}M_g X M_g^T) \\
&= \displaystyle  \mathrm{tr}(\frac{1}{|G|} \sum_{g \in G}M_g A_jX M_g^T) \\
&= \displaystyle  \mathrm{tr}(\frac{1}{|G|} \sum_{g \in G}A_jX ) \\
&= \displaystyle  \mathrm{tr}(A_jX),
\end{array}
\]
where we have used that the well-known fact that the trace is invariant under change of basis. By the same argument  $\mathrm{tr}(CX') =\mathrm{tr}(CX)$, which implies that $X'$ is optimal. It is easy to see that $X' \in \mathcal{A}'$.
\end{proof}

The following theorem follows, which gives a tremendous computational advantage when $d$ is significantly smaller than $|Z|$:

\begin{theorem}[\cite{KPSchrijver2007}]
\label{thm:Schrijver}
The $G$-invariant semidefinite program
\[
\max \{\mathrm{tr}(CX) ~ | ~X \succeq 0,  \mathrm{tr}(A_i X)=b_i \textrm{ for } i = 1, \dots, m\} 
\]
has a solution $X = \sum_{i=1}^d x_iB_i$ that can be obtained by
\[
\max \{\mathrm{tr}(CX) ~ | ~ \sum_{i=1}^d x_iL_i \succeq 0, \mathrm{tr}(A_i \sum_{j=1}^d B_jx_j)=b_i \textrm{ for } i = 1, \dots, m\}.
\]
\end{theorem}

\section{Problem formulated as a semidefinite program}
\label{sec:methodscolor}
We follow the convention that an arithmetic progression in $G$ of length $k$ is a set of $k$ distinct element $\{a,b \cdot a,\dots,b^{k-1} \cdot a \}$ where $a \in G$, $b \in \mathbb{Z}^+$. Also, as $\{1,2,3\}$, $\{1,3,2\}$, $\{2,1,3\}$, $\{2,3,1\}$, $\{3,1,2\}$ and $\{3,2,1\}$ denote the same set they are considered as the same arithmetic progression, thus when summing over all arithmetic progressions the set is only considered once.

Let $\chi : G \rightarrow \{-1,1\}$ be a $2$-coloring of the finite group $G$, and for simplicity let $x_g = \chi(g)$ for all $g \in G$. Furthermore, let $x_{_G}$ denote the vector of all variables $x_g$. Let us introduce the polynomial
\[
\begin{array}{rl}
p(x_a,x_b,x_c) &= \displaystyle \frac{(x_a+1)(x_b+1)(x_c+1)-(x_a-1)(x_b-1)(x_c-1)}{8}  \\
&= \displaystyle \frac{x_ax_b+x_ax_c+x_bx_c+1}{4},
\end{array}
\]
where $a,b,c \in G$, which has the property that
\[
p(x_a,x_b,x_c) = \left\{ 
\begin{array}{ll}
1 & \text{if }  x_a=x_b=x_c\\
0 & \text{otherwise.}
\end{array} \right. 
\]
The polynomial $p$ is one when $a,b,c$ are the same color and zero otherwise. It follows that
\[
R(3,G,2)=  \min_{x_{_G} \in \{-1,1\}^{|G|}} \displaystyle\sum_{\{a,b,c\} \textrm{ is an A.P. in } G}p(x_a,x_b,x_c).
\]

The integer problem is relaxed to a problem on the hypercube to obtain a lower bound for $R(3,G,2)$ . Since any solution of the integer program is also a solution to the hypercube problem we have
\[
\begin{array}{rl}
R(3,G,2) &\displaystyle \geq \min_{ x_{_G} \in [-1,1]^{|G|}} \sum_{\{a,b,c\} \textrm{ is an A.P. in } G}p(x_a,x_b,x_c) \\
& = \displaystyle \min_{ x_{_G} \in [-1,1]^{|G|}} \sum_{\{a,b,c\} \textrm{ is an A.P. in } G} \frac{x_ax_b+x_ax_c+x_bx_c+1}{4} \\
& = \displaystyle \min_{ x_{_G} \in [-1,1]^{|G|}} \frac{p_{_G}}{4}+\sum_{\{a,b,c\} \textrm{ is an A.P. in } G}  \frac{1}{4}
\end{array}
\]
where 
\[ p_{_G} = \sum_{\{a,b,c\} \textrm{ is an A.P. in } G}x_ax_b+x_ax_c+x_bx_c. \]

We immediately get a lower bound for $R(3,G,2)$ by finding a lower bound for the homogeneous degree 2 polynomial $p_{_G}$. The coefficient of $x_ax_b$ in $p_{_G}$ equals the number of times the pair $(a,b)$ occurs in a 3-arithmetic progression, which depends on the group $G$. After the coefficients are found the state-of-the-art methods surveyed in Sections \ref{sec:poly} and \ref{sec:Sym} are used to find a lower bound for 
\[
 \min_{ x_{_G} \in [-1,1]^{|G|}}  p_{_G}.
\]

Let us use the degree 3 relaxation of Putinar's Positivstellensatz, and let $\lambda^*$ denote the maximal lower bound using this relaxation. Denote the elements of $G$ by $g_1,\dots,g_{|G|}$ and let $v$ be the vector of all monomials of degree less or equal than one in the formal indeterminates $X_{g_1},\dots,X_{g_{|G|}}$; $v=[1, X_{g_1}, \dots, X_{g_{|G|}}]^T$. We get 
\[
\begin{array}{rl}
\lambda^* = \max  & \lambda \\
\textnormal{subject to:} & \displaystyle p_{_G} - \lambda = v^T Q_0 v + \sum_{g \in G} v^T Q_g^+ v(1+X_g) + \sum_{g \in G}  v^T Q_g^- v (1-X_g) \\
& \lambda \in \mathbb{R} \\
& \displaystyle Q_0,Q_g^+,Q_g^- \succeq 0 \textnormal{ for all } g\in G.
\end{array}
\]

For simplicity, let the $Q$-matrices entries be indexed by the set $\{1$, $g_1$, $\dots$, $g_{|G|}\}$. Let $A \rtimes B$ denote the semidirect product of $A$ and $B$, defined such that $(a,b) \in A \rtimes B$ takes $i$ to $a+bi$ (here $+$ denotes the action of the group $G$, and $bi$ denotes the repeated action of $i$ on itself $b$ times: $bi=\sum_{j=1}^b i$). Arithmetic progressions are invariant under affine transformations: if $(a,b) \in G \rtimes \mathbb{Z}^+$, and $\{i,j,k\}$ is an arithmetic progression, then $(a,b) \cdot \{i,j,k\}=\{a+bi,a+bj,a+bk\}$ is also an arithmetic progressions. It follows that the semidefinite program is invariant under affine transformations, and thus that we can find an invariant solution by Lemma \ref{lem:groupaverage}. This implies that the degree 3 relaxation has a solution for which $Q_0(g_i,g_j)=Q_0(a + b  g_i, a + b  g_j)$ and $Q_0(1,g_i) = Q_0(g_i,1) =Q_0(1,a+b  g_i)$  for all $(a,b) \in G \rtimes \mathbb{Z}^+$ and $g_i,g_j \in G$. Also $Q_{g_i}^+(g_j,g_k) = Q_{a + b  g_i}^+(a + b  g_j, a + b  g_k)$, $Q_{g_i}^+(1,g_k) = Q_{g_i}^+(g_k,1) = Q_{a + b  g_i}^+(1, a + b  g_k)$ and $Q_{g_i}^+(1,1)  = Q_{a + b  g_i}^+(1, 1)$. By the same argument we get similar equalities for the indices of the matrices $Q_{g_i}^-$. From the equalities It is easy to see that the matrices $Q_{g_i}^+$ and $Q_{g_i}^-$ are obtained by simultaneously permuting the rows and columns of $Q_{g_j}^+$ and $Q_{g_j}^-$ respectively, and hence it is enough to require that $Q_{g_1}^+$ and $Q_{g_1}^-$ are positive semidefinite where $g_1 \in G$ is the identity element. 

In conclusion we see that the number of variables can be reduced significantly, and that only three $|G|+1 \times |G|+1$-matrices are required to be positive semidefinite instead of the $2|G|+1$ matrices required in the original formulation. The dimension of the commutant is small for some groups, and in those cases the size of these matrices can be reduced further using Theorem \ref{thm:Schrijver}.

The certificates for the lower bounds we get from these methods are numerical, and to make them algebraic additional analysis of the numerical data, and possibly further restrictions, must be done. There is no general way to find algebraic certificates, and for many problem it might not even be possible. In this paper it is a vital step to find as the numerical certificates only gives certificates for one group at the time whereas we need a certificate for all groups.

It is in general very difficult to go from numerical patterns to algebraic certificates. For a specific group all information required to find a lower bound can be found in an eigenvalue decomposition of the involved matrices, but there is no general way of finding the optimal algebraic lower bound when the different eigenvalues and eigenvectors have decimal expansions that cannot trivially be translated into algebraic numbers. If one is interested in a rational approximation to the lower bound one can use methods by Parrilo and Peyrl \cite{Parrilo2008}. These methods gives a good certificate for a specific group but does not help when one want to find certificates for an infinite family of groups.

To find a certificate for all groups, one of the tricks we used was to restrict the SDP above by requiring that some further entires equal one another, in order to at least get a lower bound for $\lambda^*$. There are also many other ways to restrict the SDP further, including setting elements to zero. Restricting the SDP either decreases the optimal value or reduces the number of solutions to the original problem. In the ideal scenario one can keep restricting the SDP until there is only one solution, without any change in the optimal value.

\section{Proof of Theorem \ref{thm:groupcase}}

To make the computations in the proofs that follow readable we introduce additional notation:
\[
\sigma(a;b_0,b_1,\ldots,b_{n-1})=a+ \sum_{i,j\in \mathbb{Z}_n} b_{j-i} X_iX_j.
\]

By elementary calculations we have the following equalities, which are needed in the proofs:

\[
I_1=\displaystyle  \sum_{i\in \mathbb{Z}_{n}} (1-X_i^2)  =  \sigma(n;  -1,  0, \dots ,  0),
\]
\[
I_2=\displaystyle \Big( \sum_{i\in \mathbb{Z}_{n}} X_i \Big)^2  = \sigma\Big( 0;  1,  2,  \dots,  2 \Big), 
\]
\[
I_3^j=\displaystyle \frac{1}{2}\sum_{i \in \mathbb{Z}_{n}}(X_i-X_{i+j})^2 = \sigma\Big( 0; 1 , 0 , \dots, 0 ,-1 , 0, \dots, 0,-1,0,\dots,0 \Big),
\]

The first one is non-negative since we for the problem require that $-1 \leq x_i \leq 1$ and the other polynomials are non-negative since they are sums of squares.

\begin{proof}[Proof of Theorem \ref{thm:groupcase}]
Let us count all arithmetic progressions $\{a,b \cdot a,b \cdot b \cdot a\}$, with $(a,b) \in G \times G_k$. There is a cyclic subgroup of $G$ with elements $\{1,b,b^2,\dots,b^{k-1}\}$. Let us write $k=p_1^{e_1}p_2^{e_2} \cdots p_s^{e_s}$ for $1<p_1<\dots<p_s$ distinct primes and $e_1,\dots,e_s$ positive integers. The elements $\{b^{p_i},b^{2p_i},\dots,b^{(\frac{k}{p_i}-1)p_i}\}$ are of order less  than $k$ for all $i$ whereas the elements $U=\{b^t :  t$ and $k$ are coprime$\}$ are of order $k$. Note that $\phi(k)=|U|$. If $k<3$ there are no arithmetic progressions, so in all the following calculations we will always assume that $k \geq 3$. If $3 \nmid k$, then all triples $\{(1,b^i,b^{2i}) : b^i \in U \}$ will be distinct arithmetic progressions. Since there are $\frac{N_k}{\phi(k)}$ different cyclic subgroups of $G$ of this type there are $\phi(k)\frac{N_k}{\phi(k)}=N_k$ arithmetic progressions of the form $\{1,b^i,b^{2i}\}$, where $b^i$ is an element of order $k$. Since $\{a, b\cdot a, b\cdot b\cdot a\}$ is an arithmetic progression if and only if $\{1,b,b\cdot b\}$ is an arithmetic progression it follows that there are $\frac{N\cdot N_k}{2}$ arithmetic progressions with $(a,b) \in G \times G_k $ if $3 \nmid k$. If $3 | k$ we have to be careful so that we do not count arithmetic progressions of the form $\{a,b^{\frac{k}{3}}\cdot a,b^{\frac{2k}{3}}\cdot a\}$ three times. Since $b^{\frac{k}{3}}$ will be of lower order than $k$ if $k>3$ it follows that  this kind of arithmetic progressions only occur for $(a,b) \in G \times G_3$. We conclude that there are $\frac{N\cdot N_k}{2}$ arithmetic progressions with $(a,b) \in G \times G_k $ if $k>3$, and $\frac{N\cdot N_3}{6}$ if $(a,b) \in G \times G_3$.

The number of monochromatic arithmetic progressions is given by
\[
\begin{array}{rl}
R(3,G,2) &=  \displaystyle\sum_{\{a,b,c\} \textrm{ is an A.P. in } G}p(x_a,x_b,x_c) \\
&= \displaystyle\sum_{\{a,b,c\} \textrm{ is an A.P. in } G}\frac{x_ax_b+x_ax_c+x_bx_c+1}{4}.
\end{array}
\]
Let us rewrite this as
\[
R(3,G,2)=\frac{\sum_{k=4}^n \frac{N\cdot N_k}{2} +p_k }{4} + \frac{N \cdot N_3 }{24} + \frac{p_3}{4},
\]
where $p_k$ is the sum of all polynomials $x_sx_{t \cdot s}+x_sx_{t \cdot t \cdot s}+x_{t \cdot b}x_{t \cdot t \cdot s}$ where $t$ is of order $k$. Let us also define the further reduced polynomial $p_k^{(a,b)}$ for a fixed pair $(a,b) \in G \times G_k$ by
\[
p_k^{(a,b)}=\sum_{0 \leq i < j \leq k-1}c_{b^i \cdot a,b^j \cdot a}x_{b^i \cdot a}x_{b^j \cdot a}
\]
where $c_{b^i \cdot a,b^j\cdot a}$ denotes how many times the pair $(b^i\cdot a,b^j\cdot a)$ is in an arithmetic progression $\{s,t \cdot s,t \cdot t \cdot s\}$ with $t$ of order $k$. When $a=1$ we will have a set $\{b_{i_1},\dots,b_{i_{N_k/\phi(k)}} \}$ of elements of order $k$ such that $b_{i_j}^{c_1} \neq b_{i_k}^{c_2}$ for all $j,k \in \{1,\dots,N_k/\phi(k) \}$ and all $c_1,c_2 \in \{1,\dots,k-1\}$, and can write
\[
p_k= \frac{1}{k} \sum_{a\in G} \sum_{j=1}^{N_k/\phi(k)} p_k^{(a,b_{i_j})}.
\]
It follows that if $p_k^{(1,b)} \geq c$ for a $b$ of order $k$, then $p_k \geq \frac{N}{k}\frac{N_k}{\phi(k)}c $.

{\bf Let k mod 2 = 1, k mod 3 $\neq$ 0:}
It is fairly easy to see that if $b^i$ is of order $k$, then so is $b^{-i}$ and $b^{2i}$. Also, if $2 | i$ then $b^{i/2}$ is of order $k$, if $2 \nmid i$ then $b^{(n+i)/2}$ is of order $k$. Hence any $b^i$ is in both arithmetic progressions $b^{-i},1,b^i$ and $1,b^i,b^{2i}$, and also in either $1,b^{i/2},b^i$ or $1,b^{(n+i)/2},b^i$. These are all arithmetic progressions containing both $1$ and $b^i$. Note that the elements that are not of order $k$ (apart from the identity element) are not in any of these arithmetic progressions, hence if $k=p_1^{e_1}\dots p_s^{e_s}$, then
\[
p_k^{(1,b)}(X)=\sum_{i,j\in \mathbb{Z}_k} b_{j-i} X_iX_j=\sigma(0;b_0,b_1,\ldots,b_{k-1})
\]
where $b_{tp_i}=0$ for $t\in \{0,\dots,k-1\}$ and $i\in \{1,\dots,s\}$, and $b_t=3$ for all other $t$. In particular we get
\[
p_k^{(1,b)}(X) = \frac{3}{2}I_2+\frac{3}{2}\sum_{t,i}I_3^{tp_i} +\frac{3}{2}(1+k-\phi(k)-1)I_1 - \frac{3}{2}(k-\phi(k))k \geq - \frac{3}{2}(k-\phi(k))k,
\]
and it follows that 
\[
p_k \geq - \frac{N}{k}\frac{N_k}{\phi(k)}  \frac{3}{2}(k-\phi(k))k=-  \frac{3}{2} \frac{N \cdot N_k}{\phi(k)} (k-\phi(k)),
\]
and furthermore  that
\[
\frac{N\cdot N_k}{2} +p_k \geq \frac{N\cdot N_k}{2}(1- 3\frac{k-\phi(k)}{\phi(k)}).
\]

In cases when $1- 3\frac{k-\phi(k)}{\phi(k)} < 0$, that is when $\phi(k) <\frac{3k}{4}$, we will instead use the trivial bound
\[
\frac{N\cdot N_k}{2} +p_k \geq 0
\]

{\bf k mod 2 = 0: }
$b \in G_k$, and let us color all elements $\{1,b^2,b^4,\dots,b^{k-2}\}$ blue and $\{b,b^3,b^5,\dots,b^{k-1}\}$ red. In an arithmetic progression $\{a,c\cdot a,c\cdot c\cdot a \}$ with $(a,c) \in \{0,b,b^2,\dots,b^{k-1} \} \times \{0,b,b^2,\dots,b^{k-1} \} \cap G_k$ it holds that $a$ and $c\cdot a$ are of different colors. The coloring can be extended too all pairs $(a,c) \in G \times G_k$, and thus there is a coloring without monochromatic arithmetic progressions. Hence we cannot hope to do better than the trivial bound using these methods:
\[
\frac{N\cdot N_k}{2} +p_k \geq 0.
\]
 
{\bf k mod 3 = 0: }
Let us color the elements $\{0,b,b^3,b^4,\dots,b^{k-3},b^{k-2}\}$ blue and $\{b^2,b^5,b^8,\dots,b^{k-1}\}$ red. As when k mod 2 = 0 we consider arithmetic progressions $\{a,c\cdot a,c\cdot c\cdot a\}$ with $(a,c) \in \{0,b,b^2,\dots,b^{k-1} \} \times \{0,b,b^2,\dots,b^{k-1} \} \cap G_k$. It is easy to see that either $a$ and $c\cdot a$ or $a$ and $c \cdot c \cdot a$ are of different colors. Again there is a coloring without monochromatic arithmetic progressions and so we cannot do better than the trivial bound:
\[
\frac{N\cdot N_k}{2} +p_k \geq 0
\]
for $k>3$ and
\[
\frac{N\cdot N_3}{6} +p_3 \geq 0
\]
for $k=3$.

Let $K=\{k \in \{5,\dots,n\} : \phi(k) \geq \frac{3k}{4}\}$. If $2$ or $3$ divide $k$ then $\phi(k) < \frac{3k}{4}$, and hence none of those numbers are included in $K$. Summing up all cases we get
\[
R(3,G,2) \geq  \frac{\sum_{k \in K}  \frac{N\cdot N_k}{2}(1- 3\frac{k-\phi(k)}{\phi(k)})}{4}.
\]
\end{proof}

\section{Methods for longer arithmetic progressions}
Let $\chi : G \rightarrow \{-1,1\}$ be a $2$-coloring of the group $G$, and let $x_g = \chi(g)$ for all $g \in G$. Let also $x_{_G}$ denote the vector of all variables $x_g$. For $a,b,c \in G$, let us introduce the polynomial
\[
p(x_{a_1},\dots,x_{a_k}) = \displaystyle \frac{(1+x_{a_1})\cdots(1+x_{a_k})+(1-x_{a_1})\cdots(1-x_{a_k})}{2^k}, 
\]
which has the property that
\[
p(x_{a_1},\dots,x_{a_k}) = \left\{ 
\begin{array}{ll}
1 & \text{if }  x_{a_1}=\dots=x_{a_k}\\
0 & \text{otherwise.}
\end{array} \right. 
\]
In other words, the polynomial $p$ is one when $\{a_1,\dots,a_k\}$ is a monochromatic arithmetic progression and zero otherwise. It follows that
\[
R(k,G,2)=  \min_{x_{_G} \in \{-1,1\}^{|G|}} \displaystyle\sum_{\{a_1,\dots,a_k\} \textrm{ is an A.P. in } G}p(x_{a_1},\dots,x_{a_k}).
\]

To find a lower bound for $R(k,G,2)$ we relax the integer quadratic optimization problem to an optimization problem on the hypercube. Since it is a relaxation, i.e. any solution of the integer program is also a solution to the hypercube problem, we have
\[
R(k,G,2) \geq \min_{ x_{_G} \in [-1,1]^{|G|}} \displaystyle\sum_{\{a_1,\dots,a_k\} \textrm{ is an A.P. in } G}p(x_{a_1},\dots,x_{a_k}),
\]
an optimization problem that we can find lower bounds for using Putinar's Positivstellensatz and the Lasserre Hierarchy.

\section{Related problems}
The methods developed in this article may also be applied in a wide variety of similar problems. 

Let $R(3,[n],2)$ denote the minimal number of monochromatic arithmetic progressions of length $3$ in a $2$-coloring of $[n]$. Asymptotic bounds for $R(3,[n],2)$ have been found for large $n$ using other methods \cite{Parrilo2008_2}:
\[ \frac{1675}{32768}n^2(1+o(1)) \leq R(3,[n],2) \leq \frac{117}{2192}n^2(1+o(1)). \]
The author has in collaboration with Oscar Kivinen found numerical results suggesting that the lower bound can be improved to $0.052341 \dots$ using a degree $3$-relaxation ($\frac{1675}{32768} = 0.05111 \dots$ and $\frac{117}{2192} =0.05337 \dots$) . The major challenge is to use the numerical results to obtain an algebraic certificate because of the lack of symmetries in the problem. For large $n$ the numerical information seems to converge towards a fractal, and to obtain the improved lower bound based on the proposed methods one would need to fully understand the behavior of the fractal. Even if it is not possible to understand the fractal, one can probably do approximations that would improve on the lower bound. This is work under progress.

Let $R(3,\mathbb{Z}_n,2)$ denote the minimal number of monochromatic arithmetic progressions of length $3$ in a $2$-coloring of the cyclic group $\mathbb{Z}_n$. Optimal, or a constant from optimal, lower bounds for $R(3,\mathbb{Z}_n,2)$ have been found for all $n$ \cite{Sjoland_cyclic}:
\[
n^2/8-c_1n+c_2 \leq R(3,\mathbb{Z}_n,2) \leq n^2/8-c_1n+c_3,
\] 
where the constants depends on the modular arithmetic and are tabulated in the following table.
\[
\begin{array}{c|c|c|c}
n \mod 24 & c_1 & c_2 & c_3 \\
\hline
1,5,7,11,13,17,19,23 & 1/2 & 3/8 & 3/8 \\
8,16 & 1 & 0 & 0 \\
2,10 & 1 & 3/2 & 3/2 \\
4,20 & 1 & 0 & 2 \\
14,22  & 1 & 3/2 & 3/2 \\
3,9,15,21 &  7/6 & 3/8 & 27/8 \\ 
0 &  5/3 & 0  & 0 \\ 
12 &  5/3 & 0 & 18 \\ 
6,18 &  5/3 &1/2 & 27/2 \\ 
\end{array}
\]
A corollary is that we can find an optimal, or a constant from optimal, lower bound for the number of monochromatic arithmetic progressions for the dihedral group $D_{2n}$ for any $n$:
\[
R(3,D_{2n},2)=2R(3,\mathbb{Z}_n,2).
\]
In particular 
\[
n^2/4-2c_1n+2c_2 \leq R(D_{2n};3) \leq n^2/4-2c_1n+2c_3
\]
where the constants can be found in the table above.

Let $R(4,\mathbb{Z}_n,2)$ denote the minimal number of monochromatic arithmetic progressions of length $4$ in a $2$-coloring of the cyclic group $\mathbb{Z}_n$. Asymptotic bounds for $R(4,\mathbb{Z}_n,2)$ have been found for large $n$ in \cite{Wolf2010}, and the bounds have since then been improved in \cite[Theorem 1.1, 1.2, 1.3]{Lu2012} to:
\[
\frac{7}{192}p^2(1+o(1)) \leq R(4,\mathbb{Z}_p,2) \leq \frac{17}{300}p^2(1+o(1))
\]
when $p$ is prime, and for other $n$
\[
c_1n^2(1+o(1)) \leq R(4,\mathbb{Z}_n,2) \leq c_2n^2(1+o(1))
\]
where the constants depends on the modular arithmetic on $n$ in accordance with the following table
\[
\begin{array}{c|c|c}
n \mod 4 & c_1 & c_2 \\
\hline
1,3 & 7/192 & 17/300\\
0 & 2/66 & 8543/1452000 \\
2 & 7/192 & 8543/1452000 \\
\end{array}
\]
Furthermore \cite[Theorem 1.5]{Lu2012}
\[
\underline{\lim}_{n \rightarrow \infty} R(4,\mathbb{Z}_n,2) \leq \frac{1}{24},
\]
and it is conjectured that \cite[Conjecture 1.1]{Lu2012}:
\[
\inf_n\{R(4,\mathbb{Z}_n,2) \} = \frac{1}{24}.
\]
The author has tried to improve the bounds using a degree 4-relaxations with further restriction to simplify the problem, but as the author obtained algebraic bounds that are far from the current best bounds it seems like one needs to omit the restrictions and possibly use a higher degree relaxation to get relevant bounds. One of the main challenges is that the solution depends heavily on the modular arithmetic of $n$, and so one needs to numerically find solutions for fairly high values of $n$, which is not feasible for a high degree relaxation. Doing a full degree 4-relaxation is possible but tedious, and since one can only get numerical results for fairly small $n$ it is difficult to find general patterns. One cannot exclude the possibility that this would be enough to improve the bounds, but what is more likely is that it would require additional tricks or techniques after one has found numerical certificates for small $n$.

Let $R(4,[n],2)$ denote the minimal number of monochromatic arithmetic progressions of length $4$ in a 2-coloring of $[n]$. Let furthermore $R(5,\mathbb{Z}_n,2)$ and $R(5,[n],2)$ denote the minimal number of monochromatic arithmetic progressions of length $5$ in $\mathbb{Z}_n$ and $[n]$ respectively. Upper bounds have been found for $R(4,[n],2)$, $R(5,\mathbb{Z}_n,2)$ and $R(5,[n],2)$.
For $n$ large enough \cite[Equation (12)]{Lu2012}:
\[ R(4,[n],2) \leq \frac{1}{72}n^2(1+o(1)). \]
When $n$ is odd and large enough we have \cite[Theorem 1.4]{Lu2012}:
\[
R(5,\mathbb{Z}_n,2) \leq \frac{3629}{131424}n^2(1+o(1)).
\]
When $n$ is even and large enough we have \cite[Theorem 1.4]{Lu2012}:
\[
R(5,\mathbb{Z}_n,2) \leq \frac{3647}{131424}n^2(1+o(1)).
\]
Furthermore \cite[Theorem 1.5]{Lu2012}:
\[
\underline{\lim}_{n \rightarrow \infty} R(5,\mathbb{Z}_n,2) \leq \frac{1}{72}.
\]
Finally when $n$ is large enough we have \cite[Equation (13)]{Lu2012}:
\[ R(5,[n],2) \leq \frac{1}{304}n^2(1+o(1)). \]
These are all upper bounds that have been obtained through good colorings. Lower bounds to all these problems can possibly be found using the methods developed in this article using a relaxation of high enough order. As it is numerically very difficult to find solutions to relaxations of high orders, one should be aware that this is not guaranteed to work in practice.

Another type of problems that one can use the methods developed in this article to solve are enumeration problems in fixed density sets. As in this article it is of interest to count arithmetic progressions. Let $W(k,S,\delta)$ denote the minimal number of arithmetic progressions of length $k$ in any subset of $S$ of cardinality $|S|\delta$. One can for example let $S$ be a group or $[n]$. Note that if one could find strict lower bounds for $W(k,[n],\delta)$ for all $n$, $k$ and $\delta$ this would imply optimal quantitative bounds for Szemer\'edi's theorem. Although it might be too ambitious to try to find strict lower bounds it might still be possible to find bounds good enough to generalize Szemer\'edi's theorem. This kind of results can be obtained by methods very similar to the ones used in this article as discussed in \cite{Sjoland_density}.

\section*{Acknowledgements}
I would like to thank Alexander Engstr\"om for our long discussions on this topic and for all his valuable feedback. I would also want to thank Markus Schweighofer and Cynthia Vinzant for their comments, which have been incorporated in the article.

\end{document}